\documentclass[letterpaper, 10 pt, conference]{ieeeconf}  
\IEEEoverridecommandlockouts                              
\usepackage{graphicx}
\usepackage{mathptmx} 
\usepackage{times} 
\usepackage{amsmath} 
\usepackage{amssymb}  

\usepackage{amsthm}
\usepackage{float}
\usepackage[dvipsnames]{xcolor}

\theoremstyle{plain}
\newtheorem{theorem}{Theorem}[section]
\newtheorem{proposition}[theorem]{Proposition}

\theoremstyle{definition}

\newtheorem{assumption}{Assumption}[section]
\theoremstyle{remark}
\newtheorem{remark}[theorem]{Remark}

\title{\LARGE \bf
Observer design for classes of nonlinear port-Hamiltonian systems
}

\author{Filippo Ugolini$^{1}$, Ning Liu$^{2}$, Yongxin Wu$^{2}$, Yann Le Gorrec$^{2}$, Alessandro Macchelli$^{1}$
\thanks{*This work is supported by the EIPHI Graduate School (contract:
ANR-17-EURE-0002) and Région Franche-Comté.}
\thanks{$^{1}$Filippo Ugolini and Alessandro Macchelli are with the Department of Electrical, Electronic and Information Engineering (DEI), University of Bologna, Italy.}
\thanks{$^{2}$Ning Liu, Yongxin Wu, Yann Le Gorrec are with Université Marie et Louis Pasteur, SUPMICROTECH, Institut FEMTO-ST, Besançon, France
        {\tt\small yongxin.wu@femto-st.fr}}%
}
\begin{document}

\maketitle
\thispagestyle{empty}
\pagestyle{empty}

\begin{abstract}
 
This paper presents a systematic observer design methodology for a class of port-Hamiltonian (pH) systems with state-dependent input matrices. Such systems can model a wide range of electromechanical systems, including magnetic levitation systems, MEMS devices, and electro-active polymer actuators such as DEA actuators, HASEL actuators, etc. In these applications, state-dependent input matrices naturally arise when the system is modeled under quasi-static electrical assumptions. An LPV polytopic embedding framework, together with LMI-based synthesis conditions, is proposed. The nonlinear error dynamics are represented as a convex combination of linear vertex systems using an integral mean value representation, which enables systematic computation of the observer gains that ensures exponential convergence. Both constant-gain and gain-scheduled observers are derived. Numerical results demonstrate the effectiveness of the proposed observer, with the gain-scheduled design achieving a significant increase in the maximum certifiable decay rate compared with constant-gain approaches, thereby reducing conservatism.

\end{abstract}

\section{Introduction}
Electromechanical systems such as dielectric elastomer actuators (DEAs) \cite{Liu2022, Hammoud2026}, HASEL actuators \cite{Cisneros2025, Yeh_HASEL_2022}, magnetic levitation systems, and micro-electromechanical (MEMS) devices \cite{L2gain_book} exhibit strong nonlinearities and multiphysics coupling. In many of these systems, state-dependent input matrices naturally arise from the underlying electromechanical interactions.
Port-Hamiltonian (pH) systems provide an energy-based modeling framework that captures storage, dissipation, and interconnection structures in a unified representation for multi physical domains \cite{vanderSchaft2000}. This structure ensures passivity by construction and offers a natural foundation for nonlinear control design.

Many control techniques for pH systems, including interconnection and damping assignment passivity-based control (IDA-PBC) \cite{OrtegaSpongMaschkeAllgower02}, and control by interconnection \cite{OrtegaCbI08}, rely on state feedback. However, in practice, only partial state measurements are available. This limitation necessitates the development of state observers that are compatible with the port-Hamiltonian structure.
With this in mind, this paper focuses on observer design for pH systems in an open-loop configuration, providing a foundation for future closed-loop implementations. 

Observer design for nonlinear pH systems has been extensively studied: passivity-based observers \cite{Venkatraman2010} guarantee exponential convergence for specific system classes, while subsequent works have proposed alternative designs exploiting damping properties \cite{Pfeifer2021} or structural decompositions \cite{Granados2024, Rojas2021}. More recently, contraction-based approaches \cite{Spirito2024} have provided strong convergence guarantees while preserving the pH structure. However, most existing results rely on restrictive assumptions, notably constant 
input matrices. This limitation is critical in electromechanical systems, where the input mapping is inherently state-dependent. This paper addresses this issue by proposing an observer design methodology for nonlinear pH systems with state-dependent input matrices. The nonlinear estimation error dynamics are reformulated using an exact integral representation based on the mean value theorem, leading to a linear parameter-varying (LPV) representation. A polytopic embedding is then employed to derive Linear Matrix Inequality (LMI) conditions ensuring exponential convergence of the error dynamics.

The main contributions are: (i) an exact integral reformulation of the nonlinear error dynamics for pH systems with state-dependent input matrices, (ii) a polytopic LPV embedding enabling tractable observer synthesis, (iii) LMI-based conditions guaranteeing exponential convergence, and (iv) a comparison between constant-gain and gain-scheduled observers highlighting conservatism reduction.

The remainder of this paper is organized as follows. Section \ref{sec:PRELIMINARIES} introduces the system model and problem formulation. Section \ref{sec:Observer design} presents the observer design methodology. Section \ref{sec:Simulation} provides numerical validation. Conclusions and perspectives are presented in Section \ref{sec:conclusions}.

\section{Preliminaries and problem formulation}\label{sec:PRELIMINARIES}

\subsection{Port-Hamiltonian Systems}\label{subsec:Mech_pH}

Port-Hamiltonian systems constitute a unified modeling framework for multi physical systems, based on the concepts of energy storage, dissipation, and power-conserving interconnections \cite{vanderSchaft2000}. In this paper, we present the following class of pH systems. Given the state vector $\mathbb{R}^{2n}\ni x = [q^T, \; p^T]^\top$, where $q \in \mathbb{R}^n$ denotes the unactuated states and $p \in \mathbb{R}^n$ the actuated states, the system dynamics is formulated as:
\begin{equation}
\dot{x} = (J - R)\nabla H(x) + g(x) u(t), \quad y = g(x)^\top \nabla H(x),
\label{eq:plant}
\end{equation}
where both the interconnection ($J = -J^\top\in \mathbb{R}^{2n\times 2n}$) and dissipation ($\mathbb{R}^{2n\times 2n}\ni R = R^\top \geq 0$) matrices are constant:
\begin{equation}
J = \begin{bmatrix} 0 & I_n \\ -I_n & 0 \end{bmatrix}, \quad
R = \begin{bmatrix} 0 & 0 \\ 0 & \eta \end{bmatrix},
\end{equation}
with $\mathbb{R}^{n\times n}\ni\eta > 0$ the damping coefficient. $u\in\mathbb{R}^m$ and $y\in\mathbb{R}^m$ are power conjugated input and output.

The Hamiltonian function is quadratic in the state:
\begin{equation}
H(x) = \frac{1}{2}q^\top K q+\frac{1}{2} p^\top M^{-1} p   = \frac{1}{2} x^\top Q x,
\end{equation}
where $K=K^\top$ and $M=M^\top$ are positive definite, which are related to the energy matrices. Define $Q=\text{diag}\left[K,\; M^{-1}\right]$ as the Hessian of the Hamiltonian.
In this paper, we consider a specific input matrix that 
depends on the state variables $q$:
\begin{equation}
g(x) = \begin{bmatrix} 0 & g_2(q) \end{bmatrix}^{\top}, 
\label{eq:input_map}
\end{equation}
with $g_2(q)\in\mathbb{R}^{n\times m}$.

\begin{remark}
    The system \eqref{eq:plant} stems from the modeling of electromechanical systems like DEAs in \cite{Hammoud2024} under the assumption that the electric dynamics is fast enough to be considered quasi-static.
\end{remark}
\subsection{Observer Structure}
Assume the system \eqref{eq:plant} is fully observable and consider the Luengerger-like structure with constant $L$ adapted to system \eqref{eq:plant}:
\begin{equation}
\dot{\hat{x}} = (J - R)\nabla H(\hat{x}) + g(\hat{q}) u(t) + L (y - \hat{y}),
\label{eq:observer}
\end{equation}
where $\hat{x} = [\hat{q}^\top, \; \hat{p}^\top]^\top$ is the estimated state, $\hat{y} = g(\hat{q})^\top \nabla H(\hat{x})$, and $L \in \mathbb{R}^{2n \times m}$ is the observer gain to be designed.
Define the estimation error as $\tilde{x} := x - \hat{x} = [\tilde{q}^\top, \; \tilde{p}^\top]^\top$. Subtracting \eqref{eq:observer} from \eqref{eq:plant} yields
\begin{equation}
\begin{aligned}
\dot{\tilde{x}} &= (J - R)Q\tilde{x} + \big(g(q) - g(\hat{q})\big)u(t) + L\big(y - \hat{y}\big) \\
&= A_0 \tilde{x} + \gamma(x,u) - \gamma(\hat{x},u),
\end{aligned}
\label{eq:error_dynamics}
\end{equation}
where $A_0 := (J - R)Q$, and $\gamma$ is defined as
\begin{equation}
\gamma(x,u) := g(x)u - L g(x)^\top Q x.
\label{eq:gamma}
\end{equation}
The representation in \eqref{eq:error_dynamics} isolates the nonlinear terms and expresses the error dynamics as a linear term in $\tilde{x}$ plus a state-dependent component. It is suitable for the contraction-based procedure considering a quadratic Lyapunov function.

\subsection{Structural Assumptions}

The following assumptions on the Hamiltonian function and system structure are considered, which are critical for the convergence analysis presented in Section \ref{sec:Observer design}.
\begin{assumption}[{Bounded Hessian \cite[\emph{Assumption~I}]{Spirito2024}}]\label{ass:bounded_hess}
    The Hessian of the Hamiltonian function is uniformly bounded and positive definite on the operating domain $\mathcal{D}$. That is, there exist positive constants $h_1, h_2 > 0$ such that for all $x \in \mathcal{D}$:
\begin{equation}
h_1 I \leq \nabla^2 H(x) \leq h_2 I.
\label{eq:bounded_hessian}
\end{equation}
\end{assumption} 
\begin{assumption}[{Local Lipschitz Continuity \cite[\emph{Lemma 5}]{Ichalal2012}}]\label{ass:continuity}
The gradient $\nabla H(x)$ is locally Lipschitz continuous on $\mathcal{D}$. For systems with state-dependent input matrix $g(x)$, we additionally require that $g(x)$ and its derivative $\frac{\partial g}{\partial x}$ are locally Lipschitz continuous on $\mathcal{D}$.
\end{assumption}
These regularity conditions ensure the existence and uniqueness of solutions to both the plant dynamics \eqref{eq:plant} and the observer dynamics \eqref{eq:observer}, as well as the validity of the integral mean value representation developed in the next Section.

\section{Observer Design via Polytopic Embedding}\label{sec:Observer design}
\subsection{Integral Representation of Nonlinear Terms}
The main challenge in analyzing the convergence of the error dynamics \eqref{eq:error_dynamics} is handling the nonlinear term $\gamma(x,u) - \gamma(\hat{x},u)$. To address this issue, we propose the following integral mean value representation to separate the error state $\tilde{x}$ from the nonlinear term for observer design.
\begin{proposition}\label{prop:integral_rep}
Let $\gamma(\cdot, u) : \mathbb{R}^{2n}\times \mathbb{R}^m \to \mathbb{R}^{2n}$ be continuously differentiable in $x$ for each fixed $u \in \mathbb{R}^m$. Then the nonlinear term admits the exact representation:
\begin{equation}
\gamma(x,u) - \gamma(\hat{x},u) = \left( \int_0^1 \frac{\partial \gamma}{\partial x}(\bar{x}(s), u) \, ds \right) \tilde{x},
\label{eq:integral_rep}
\end{equation}
where $\bar{x}(s) = \hat{x} + s\tilde{x}$ for $s \in [0,1]$ is the straight-line parametrization connecting $\hat{x}$ and $x$.
\end{proposition}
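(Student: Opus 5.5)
The plan is to reduce the claim to the fundamental theorem of calculus applied along the segment joining $\hat{x}$ to $x$. Fix $u\in\mathbb{R}^m$ and define the auxiliary vector-valued function of one real variable
\begin{equation*}
\phi(s) := \gamma(\bar{x}(s),u) = \gamma(\hat{x}+s\tilde{x},u), \qquad s\in[0,1].
\end{equation*}
Since $\bar{x}(0)=\hat{x}$ and $\bar{x}(1)=\hat{x}+\tilde{x}=x$, the left-hand side of \eqref{eq:integral_rep} equals $\phi(1)-\phi(0)$.

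The first step is to establish that $\phi$ is continuously differentiable on $[0,1]$. Then apply the chain rule, using that $s\mapsto\bar{x}(s)$ is affine with constant derivative $\tilde{x}$:
\begin{equation*}
\phi'(s) = \frac{\partial \gamma}{\partial x}(\bar{x}(s),u)\,\tilde{x}.
\end{equation*}
Continuity of $\phi'$ follows from the assumed continuity of $\partial\gamma/\partial x$ in $x$ composed with the continuous map $s\mapsto\bar{x}(s)$. In the concrete setting of \eqref{eq:gamma}, this regularity is inherited from Assumption~\ref{ass:continuity}: $g$ is $C^1$ and $x\mapsto Qx$ is linear. On a bounded domain, local Lipschitz continuity also gives boundedness, so the integral is well defined.

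The second step applies the fundamental theorem of calculus componentwise to each of the $2n$ entries of $\phi$:
\begin{equation*}
\phi(1)-\phi(0)=\int_0^1 \phi'(s)\,ds = \int_0^1 \frac{\partial \gamma}{\partial x}(\bar{x}(s),u)\,\tilde{x}\,ds .
\end{equation*}
Since $\tilde{x}$ does not depend on $s$, it can be pulled out of the integral on the right by linearity of the (entrywise) matrix integral. This yields exactly \eqref{eq:integral_rep}.

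There is no genuine obstacle here; the argument is essentially the vector-valued mean value theorem in integral form. The one point deserving care is a domain issue. The proposition is stated on all of $\mathbb{R}^{2n}$, but the assumptions hold only on $\mathcal{D}$, so the representation requires the whole segment $\{\bar{x}(s): s\in[0,1]\}$ to lie where $\gamma$ is $C^1$. I would state this explicitly, for instance by assuming $\mathcal{D}$ convex or simply noting that $\gamma(\cdot,u)$ is $C^1$ on $\mathbb{R}^{2n}$ as hypothesized. It is also worth stressing that, unlike the pointwise mean value theorem, the integral form is exact for vector-valued maps, with no need for different intermediate points per component. This exactness is what makes the subsequent polytopic embedding legitimate.
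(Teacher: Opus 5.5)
Your proof is correct and matches the paper's argument step for step: define $\Phi(s)=\gamma(\bar{x}(s),u)$, use the chain rule to get $\Phi'(s)=\frac{\partial\gamma}{\partial x}(\bar{x}(s),u)\tilde{x}$, integrate over $[0,1]$ by the fundamental theorem of calculus, and factor out $\tilde{x}$, which does not depend on $s$. Your extra remarks, on the continuity of $\Phi'$ and on the segment having to stay where $\gamma$ is $C^1$ (automatic here, since the box $\mathcal{D}$ of Assumption~\ref{ass:domain} is convex), are sound points that the paper leaves implicit.
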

\begin{proof}
The proof follows \cite{Ichalal2012}. Define the scalar-parameterized function $\Phi(s) := \gamma(\bar{x}(s),u)$ where $\bar{x}(s) = \hat{x} + s(x - \hat{x})$ for $s \in [0,1]$ (see \cite[Theorem V.1]{Spirito2024}). By construction, $\bar{x}(0) = \hat{x}$ and $\bar{x}(1) = x$. Following Assumption~\ref{ass:continuity}, the mean value theorem in \cite[Lemma 4]{Ichalal2012} and the chain rule \cite[Theorem 1.16]{Faris2016}, the following representation is obtained:
\begin{equation}
\frac{d\Phi}{ds} = \frac{\partial \gamma}{\partial x}(\bar{x}(s),u) \frac{d\bar{x}}{ds} = \frac{\partial \gamma}{\partial x}(\bar{x}(s),u)\tilde{x}.
\label{eq:chain_rule}
\end{equation}
Applying the fundamental theorem of calculus on $[0,1]$:
\begin{equation}
\gamma(x,u) - \gamma(\hat{x},u) = \Phi(1) - \Phi(0) = \int_0^1 \frac{d\Phi}{ds} ds.
\label{eq:integral_representation}
\end{equation}
Since $\tilde{x}$ does not depend on $s$, it can be factored out, yielding \eqref{eq:integral_rep}, which completes the proof. 
\end{proof}
Substituting \eqref{eq:integral_rep} into \eqref{eq:error_dynamics}, the error dynamics become:
\begin{equation}
\dot{\tilde{x}} = \left( A_0 + A_\gamma(t) \right)\tilde{x},
\label{eq:lpv_error}
\end{equation}
where $A_\gamma(t)\in \mathbb{R}^{2n\times 2n}$ is the averaged Jacobian obtained substituting \eqref{eq:chain_rule} in \eqref{eq:integral_representation}:
\begin{equation}
A_\gamma(t) = \int_0^1 \frac{\partial \gamma}{\partial x}(\bar{x}(s), u(t)) \,ds .
\label{eq:averaged_jacobian_def}
\end{equation}
\subsection{Polytopic Embedding}
To enable LMI-based synthesis, we derive a polytopic representation of the averaged Jacobian $A_\gamma(t)$ defined in \eqref{eq:averaged_jacobian_def}. 
\begin{assumption}\label{ass:domain}
The true and estimated states evolve in a compact operating domain 
\begin{equation*}
\mathcal{D} = \prod_{i=1}^{n}[q_{\min}, q_{\max}] \times \prod_{i=1}^{n}[p_{\min}, p_{\max}]
\end{equation*}
and the input satisfies $u(t) \in [u_{\min}, u_{\max}]$ for all $t \geq 0$, with $q_{\min}, q_{\max}, p_{\min}, p_{\max} \in \mathbb{R}^n$ and $u_{\min}, u_{\max} \in \mathbb{R}^m$.
\end{assumption}
Under this assumption, the Jacobian of the nonlinear term $\gamma(\bar{x},u)$ in the state $x$ is bounded and admits the structure:
\begin{equation}
\frac{\partial \gamma}{\partial x}(\bar{x}, u) =
\begin{bmatrix}
0_{n \times n} & 0_{n \times n} \\
\sum_{j=1}^m a^{(j)}(\bar{q}) u_j & 0_{n \times n}
\end{bmatrix}
-
L
\begin{bmatrix}
\beta(\bar{q}, \bar{p}) & \Gamma(\bar{q})
\end{bmatrix},
\label{eq:jacobian_general}
\end{equation}
where the component matrices are defined as:
\begin{align}
a^{(j)}(q) &:= \frac{\partial g_2^{(j)}}{\partial q}(q) \in \mathbb{R}^{n\times n}, \label{eq:aj_def}\\
\beta(q,p) &:= 
\begin{bmatrix}
[a^{(1)}(q)]^T M^{-1}p \\
\vdots \\
[a^{(m)}(q)]^T M^{-1}p
\end{bmatrix} \in \mathbb{R}^{m\times n}, \label{eq:beta_def}\\
\Gamma(q) &:= g_2(q)^T M^{-1} \in \mathbb{R}^{m\times n}. \label{eq:Gamma_def}
\end{align}
Here, $g_2^{(j)}(q) \in \mathbb{R}^n$ denotes the $j$-th column of the input matrix $g_2(q)$, and $a^{(j)}(q)$ is the Jacobian of this column relative to $q$. Each row of $\beta(q,p)$ is a row vector $[a^{(j)}(q)]^T M^{-1}p$.
\begin{remark}
The formulation \eqref{eq:jacobian_general} follows dimensional consistency.
The term $\sum_{j=1}^m a^{(j)}(q)u_j \in \mathbb{R}^{n \times n}$ is a weighted sum of $m$ Jacobian matrices, where the weights are the input components $u_j$.
\end{remark}
By the integral mean value theorem applied to \eqref{eq:averaged_jacobian_def}, there exist intermediate values along the line segment connecting $\hat{x}$ and $x$ such that:
\begin{equation}
A_\gamma(t) = 
\begin{bmatrix}
0_{n \times n} & 0_{n \times n} \\
\sum_{j=1}^m a^{(j)*}(t) u_j(t) & 0_{n \times n}
\end{bmatrix}
-
L
\begin{bmatrix}
\beta^*(t) & \Gamma^*(t)
\end{bmatrix},
\label{eq:averaged_jacobian_general}
\end{equation}
where $a^{(j)*}(t)$, $\beta^*(t)$, and $\Gamma^*(t)$ are bounded element-wise due to Assumption~\ref{ass:domain}.
Moreover, each bounded parameter $\{a,\beta, \Gamma\}\supset\hat{\theta} \in [\theta_{\min}, \theta_{\max}]$ admits a convex decomposition \cite{Ichalal2012}:
\begin{equation}
\hat{\theta}(t) = (1-\mu_{\theta})\theta_{\min} + \mu_{\theta}\theta_{\max}, \quad \mu_{\theta} \in [0,1].
\label{eq:convex_decomp}
\end{equation}
\begin{proposition}[Construction of weighting functions]\label{prop:weighting_functions}
Let $\{\hat{\theta}_1, \ldots, \hat{\theta}_{n_k}\}$ denote the $n_k$ independent bounded parameters in $A_\gamma(t)$ evaluated at $(\hat{x}, u)$, each admitting the convex decomposition \eqref{eq:convex_decomp}. Define the normalized sector variable for parameter $\hat{\theta}_j$ as:
\begin{equation}
\mu_{\hat{\theta}_j} = \frac{\hat{\theta}_j - \theta_{j,\min}}{\theta_{j,\max} - \theta_{j,\min}}, \quad j = 1, \ldots, n_k,
\label{eq:sector_variables}
\end{equation}
and the corresponding elementary weights:
\begin{equation}
w_{\hat{\theta}_j}^L = 1 - \mu_{\hat{\theta}_j}, \quad w_{\hat{\theta}_j}^H = \mu_{\hat{\theta}_j}.
\label{eq:elementary_weights}
\end{equation}
For each vertex $i = (i_1, \ldots, i_{n_k}) \in \{L, H\}^{n_k}$, the weighting function is:
\begin{equation}
h_i(\hat{x}, u) = \prod_{j=1}^{n_k} w_{\hat{\theta}_j}^{i_j},
\label{eq:weighting_function}
\end{equation}
where $i_j \in \{L, H\}$ selects the low or high elementary weight for parameter $\hat{\theta}_j$. By construction, $h_i(\hat{x}, u) \geq 0$ and $\sum_{i=1}^{N_v} h_i(\hat{x}, u) = 1$ for all $(\hat{x}, u)$ with $N_v=2^{n_k}$.
\end{proposition}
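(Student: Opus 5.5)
The claim to prove is that the functions $h_i$ in \eqref{eq:weighting_function} are nonnegative and sum to one over the $N_v=2^{n_k}$ vertices. I will treat these two properties separately. Both follow from the elementary weights \eqref{eq:elementary_weights} forming a partition of unity for each parameter, combined with the product (tensor) structure of \eqref{eq:weighting_function}.

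\emph{Nonnegativity.} Under Assumption~\ref{ass:domain}, each parameter $\hat{\theta}_j$ evaluated at $(\hat{x},u)\in\mathcal{D}\times[u_{\min},u_{\max}]$ lies in $[\theta_{j,\min},\theta_{j,\max}]$. I take $\theta_{j,\min}$ and $\theta_{j,\max}$ to be the infimum and supremum of the continuous map $\hat{\theta}_j$ over this compact set; continuity comes from Assumption~\ref{ass:continuity} and the definitions \eqref{eq:aj_def}--\eqref{eq:Gamma_def}. I also assume $\theta_{j,\max}>\theta_{j,\min}$. In the degenerate case where the two are equal, the parameter is constant, so it can be dropped from the list or given $\mu_{\hat\theta_j}:=0$. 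With this choice, \eqref{eq:sector_variables} gives $\mu_{\hat{\theta}_j}\in[0,1]$, hence $w^L_{\hat\theta_j},w^H_{\hat\theta_j}\in[0,1]$. A product of nonnegative numbers is nonnegative, so $h_i\ge 0$. Moreover, \eqref{eq:convex_decomp} holds with exactly this $\mu_{\hat\theta_j}$, since $(1-\mu)\theta_{\min}+\mu\theta_{\max}=\theta_{\min}+\mu(\theta_{\max}-\theta_{\min})=\hat\theta_j$.

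\emph{Sum to one.} For each $j$ we have $w^L_{\hat\theta_j}+w^H_{\hat\theta_j}=1$. I will expand the product
\[
\prod_{j=1}^{n_k}\bigl(w^L_{\hat\theta_j}+w^H_{\hat\theta_j}\bigr)
\]
by distributivity. Each term of the expansion corresponds to choosing one factor $w^{i_j}_{\hat\theta_j}$ from each bracket, that is, to exactly one multi-index $i\in\{L,H\}^{n_k}$. Therefore
\[
1=\prod_{j=1}^{n_k}\bigl(w^L_{\hat\theta_j}+w^H_{\hat\theta_j}\bigr)=\sum_{i\in\{L,H\}^{n_k}}\prod_{j=1}^{n_k}w^{i_j}_{\hat\theta_j}=\sum_{i=1}^{N_v}h_i(\hat x,u).
\]
If a fully rigorous argument is wanted, I would establish this by induction on $n_k$. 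The base case $n_k=1$ is immediate. For the inductive step, multiply the identity for $n_k-1$ parameters by $w^L_{\hat\theta_{n_k}}+w^H_{\hat\theta_{n_k}}=1$ and split $\{L,H\}^{n_k}=\{L,H\}^{n_k-1}\times\{L,H\}$.

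\emph{Main obstacle.} The only real subtlety is ensuring that $\mu_{\hat\theta_j}\in[0,1]$, i.e.\ that the bounds $\theta_{j,\min}$ and $\theta_{j,\max}$ genuinely enclose the values of $\hat\theta_j$ on the whole operating domain and that the denominator in \eqref{eq:sector_variables} is nonzero. This is where Assumptions~\ref{ass:continuity} and~\ref{ass:domain} are used. As a consequence, the vertex representation $\hat\theta_j=\sum_i h_i\,\theta_j^{(i)}$ holds, where $\theta_j^{(i)}\in\{\theta_{j,\min},\theta_{j,\max}\}$ according to $i_j$. The combinatorial part of the argument is routine.
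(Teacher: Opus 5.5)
Your proof is correct and rests on the same two facts as the paper's proof. For each parameter the elementary weights are nonnegative and sum to one, and $h_i$ is a product of such weights, so $h_i\ge 0$. The paper stops at nonnegativity and leaves $\sum_i h_i=1$ implicit in its remark that the multi-affine $A_\gamma(t)$ is a convex combination of the vertex matrices. Your explicit expansion of $\prod_{j}\bigl(w^L_{\hat\theta_j}+w^H_{\hat\theta_j}\bigr)=1$ supplies that missing step. Your observation that nonnegativity holds only for $(\hat x,u)$ in the operating domain, where $\mu_{\hat\theta_j}\in[0,1]$, also correctly narrows the statement's ``for all $(\hat x,u)$''.
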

\begin{proof}
 Each bounded parameter admits the convex representation $\hat{\theta}_j = w_{\hat{\theta}_j}^L \theta_{j,\min} + w_{\hat{\theta}_j}^H \theta_{j,\max}$, where $w_{\hat{\theta}_j}^L + w_{\hat{\theta}_j}^H = 1$ and both weights are non-negative. The averaged Jacobian $A_\gamma(t)$ is affine in each parameter $\hat{\theta}_j$, hence it can be expressed as a convex combination of $N_v$ vertex matrices $A_i(L)$ corresponding to all combinations of extreme values $(\theta_{1,\min}$ or $\theta_{1,\max}, \ldots, \theta_{n_k,\min}$ or $\theta_{n_k,\max})$. 
The number of vertices depends on the system structure:
\begin{equation}
N_v = 2^{n_k}, \quad \text{where} \quad n_k = n_{nl} + n_u,
\label{eq:num_vertices}
\end{equation}
with $n_{nl}$ denoting the number of independent nonlinear parameters in $A_\gamma(t)$ and $n_u$ the number of independent inputs. The value of $n_k$ depends critically on the structure of system nonlinearities. The weighting function \eqref{eq:weighting_function} is the product of convex weights, hence $h_i(\hat{x}, u) \geq 0$.
\end{proof}
Taking all combinations of extreme values of the bounded parameters yields $N_v$ vertex matrices $A_i(L)$, leading to the polytopic LPV representation:
\begin{equation}
\dot{\tilde{x}} = \sum_{i=1}^{N_v} h_i(\hat{x},u) A_i(L)\tilde{x},
\label{eq:lpv_final}
\end{equation}
where $h_i(\hat{x},u) \geq 0$ and $\sum_{i=1}^{N_v} h_i(\hat{x},u) = 1$ are the weighting functions constructed via Proposition~\ref{prop:weighting_functions}. 

\subsection{Observer Synthesis via LMI}
Each vertex matrix in the polytopic representation \eqref{eq:lpv_final} admits the affine decomposition:
\begin{equation}
A_i(L) = \bar{A}_i - L\bar{C}_i, \quad i = 1, \ldots, N_v,
\label{eq:vertex_decomposition}
\end{equation}
where $\mathbb{R}^{2n \times 2n}\ni\bar{A}_i  = A_0 + \begin{bmatrix}
    0 & 0 \\a_iu_i & 0
\end{bmatrix}$ and $\mathbb{R}^{m \times 2n}\ni\bar{C}_i = \begin{bmatrix}
    \beta_i & \Gamma_i
\end{bmatrix}$ represents the $i$-th vertex of the polytopic system in the form suitable for observer design.
\begin{theorem}\label{th:lmi_synthesis}
Consider the polytopic error dynamics \eqref{eq:lpv_final} with vertex matrices $\{A_i(L)\}_{i=1}^{N_v}$ defined via \eqref{eq:vertex_decomposition}. If there exist a symmetric positive definite matrix $P = P^\top > 0$ and a matrix $\mathcal{K} \in \mathbb{R}^{2n \times m}$ such that:
\begin{equation}
\bar{A}_i^\top P + P\bar{A}_i - \bar{C}_i^\top \mathcal{K} ^\top - \mathcal{K} \bar{C}_i + 2\lambda P < 0,
\label{eq:lmi}
\end{equation}
for all $i = 1, \ldots, N_v$ and some prescribed $\lambda > 0$, then the observer gain $L = P^{-1}\mathcal{K}$ ensures exponential convergence of the estimation error:
\begin{equation}
\|\tilde{x}(t)\|_2 \leq \kappa e^{-\lambda t}\|\tilde{x}(0)\|_2, \quad \forall t \geq 0,
\label{eq:exponential_bound}
\end{equation}
where $\kappa = \sqrt{\lambda_{\max}(P)/\lambda_{\min}(P)}$ is the condition number of $P$.
\end{theorem}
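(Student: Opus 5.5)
We take the quadratic Lyapunov function $V(\tilde{x}) = \tilde{x}^\top P \tilde{x}$, with $P$ the matrix delivered by the LMI \eqref{eq:lmi}, and prove the decay estimate $\dot V \le -2\lambda V$ along solutions of the polytopic error dynamics \eqref{eq:lpv_final}. Two preliminary reductions are needed. First, set $L = P^{-1}\mathcal{K}$, so that $P L = \mathcal{K}$. Then for each vertex,
\begin{equation*}
A_i(L)^\top P + P A_i(L) = \bar{A}_i^\top P + P\bar{A}_i - \bar{C}_i^\top \mathcal{K}^\top - \mathcal{K}\bar{C}_i ,
\end{equation*}
using $(PL)^\top = L^\top P$ since $P$ is symmetric. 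Consequently \eqref{eq:lmi} is equivalent to $A_i(L)^\top P + P A_i(L) + 2\lambda P < 0$ for every $i$. Second, by Proposition~\ref{prop:weighting_functions} the weights satisfy $h_i \ge 0$ and $\sum_i h_i = 1$, so the convex combination $A(t) := \sum_i h_i A_i(L)$ obeys
\begin{equation*}
A(t)^\top P + P A(t) + 2\lambda P = \sum_i h_i \big(A_i(L)^\top P + P A_i(L) + 2\lambda P\big) \le 0 .
\end{equation*}
It is in fact strictly negative, because at least one $h_i$ is positive. The key point is that the Lyapunov matrix $P$ is common to all vertices, which is what allows the inequality to pass through the convex combination.

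Differentiating along trajectories gives $\dot V = \tilde{x}^\top (A^\top P + P A)\tilde{x} \le -2\lambda V$. By Grönwall's inequality (comparison lemma), $V(t) \le e^{-2\lambda t} V(0)$. Combining this with $\lambda_{\min}(P)\|\tilde{x}\|^2 \le V \le \lambda_{\max}(P)\|\tilde{x}\|^2$ yields
\begin{equation*}
\|\tilde{x}(t)\|_2^2 \le \frac{\lambda_{\max}(P)}{\lambda_{\min}(P)} e^{-2\lambda t}\|\tilde{x}(0)\|_2^2 .
\end{equation*}
Taking square roots gives \eqref{eq:exponential_bound} with the stated $\kappa$.

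The main obstacle is justifying that the error trajectory really is governed by \eqref{eq:lpv_final} for all $t \ge 0$. This requires $A_\gamma(t)$, which is exact by Proposition~\ref{prop:integral_rep}, to lie in the polytope for all $t$. That containment relies on Assumption~\ref{ass:domain}: both $x$ and $\hat{x}$ must remain in the box $\mathcal{D}$, hence so does the segment $\bar{x}(s)$, since $\mathcal{D}$ is convex. A further subtlety arises because the parameters in $A_\gamma$ are averages along the segment rather than point values. They are affine in the averaged quantities $a^{(j)*}, \beta^*, \Gamma^*$, however, and each averaged quantity lies in the same interval as its pointwise values. Convexity of the interval and the affine dependence therefore place $A_\gamma(t)$ in the convex hull of the vertices; the weights $h_i$ are then defined from these averaged parameters, and only their convexity properties are used. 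Finally, existence and uniqueness of solutions, and hence absolute continuity of $V$, follow from Assumption~\ref{ass:continuity}. I will state these facts explicitly and take the invariance of $\mathcal{D}$ as given by Assumption~\ref{ass:domain}.
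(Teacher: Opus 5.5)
Your proof is correct and follows the paper's argument essentially step for step: the same Lyapunov function $V=\tilde{x}^\top P\tilde{x}$, the substitution $\mathcal{K}=PL$ turning \eqref{eq:lmi} into $A_i(L)^\top P+PA_i(L)+2\lambda P<0$, the convex-combination step using $h_i\ge 0$, $\sum_i h_i=1$ and the common $P$, and then the comparison lemma with the eigenvalue bounds of $P$ to obtain $\kappa$. Your closing paragraph on why $A_\gamma(t)$ actually lies in the polytope goes beyond the paper, which simply takes \eqref{eq:lpv_final} as given with weights evaluated at $(\hat{x},u)$; that paragraph uses convexity of the box $\mathcal{D}$, the fact that segment averages stay in the parameter intervals, and a multi-affine dependence (the product $a\,u$ is bilinear, not merely affine) that the product weights handle.
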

\begin{proof}
Consider the quadratic Lyapunov function candidate $V(\tilde{x}) = \tilde{x}^\top P\tilde{x}$ with $P \succ 0$. Computing its time derivative along the trajectories of \eqref{eq:lpv_final}:
\begin{align*}
\dot{V}(\tilde{x}) &= \tilde{x}^\top P\dot{\tilde{x}} + \dot{\tilde{x}}^\top P\tilde{x} \\
&= \tilde{x}^\top P\left(\sum_{i=1}^{N_v} h_i A_i(L)\tilde{x}\right) + \left(\sum_{i=1}^{N_v} h_i A_i(L)\tilde{x}\right)^\top P\tilde{x} \\
&= \sum_{i=1}^{N_v} h_i \tilde{x}^\top(A_i(L)^\top P + P A_i(L))\tilde{x}.
\end{align*}
By the change of variables $\mathcal{K}  = PL$, condition \eqref{eq:lmi} is equivalent to:
\begin{equation*}
A_i(L)^\top P + P A_i(L) + 2\lambda P < 0, \quad \forall i = 1, \ldots, N_v.
\end{equation*}
Since $h_i(\hat{x},u) \geq 0$ and $\sum_{i=1}^{N_v} h_i(\hat{x},u) = 1$ for all $(\hat{x},u)$:
\begin{equation*}
\dot{V}(\tilde{x}) < -2\lambda V(\tilde{x}), \quad \forall \tilde{x} \neq 0.
\end{equation*}
This implies $V(\tilde{x}(t)) \leq e^{-2\lambda t}V(\tilde{x}(0))$. Using the bounds $\lambda_{\min}(P)\|\tilde{x}\|_2^2 \leq V(\tilde{x}) \leq \lambda_{\max}(P)\|\tilde{x}\|_2^2$, we obtain:
\begin{equation*}
\lambda_{\min}(P)\|\tilde{x}(t)\|_2^2 \leq e^{-2\lambda t}\lambda_{\max}(P)\|\tilde{x}(0)\|_2^2,
\end{equation*}
which yields \eqref{eq:exponential_bound} with $\kappa = \sqrt{\lambda_{\max}(P)/\lambda_{\min}(P)}$. 
\end{proof}
\subsection{Gain-Scheduled Observer}
If the LMI problem \eqref{eq:lmi} admits distinct solutions $\{(P, \mathcal{K} _i)\}_{i=1}^{N_v}$ at each vertex, a gain-scheduled observer can be constructed:
\begin{equation}
L(\hat{x},u) = \sum_{i=1}^{N_v} h_i(\hat{x},u)L_i, \quad L_i = P^{-1}\mathcal{K} _i,
\label{eq:gain_scheduled}
\end{equation}
where the same weighting functions $h_i(\hat{x},u)$ used in the polytopic representation \eqref{eq:lpv_final} interpolate the vertex gains. This approach reduces conservatism by adapting the output injection matrix to the instantaneous operating point, without enforcing a single gain across the entire polytopic domain. The performance advantages of gain scheduling are demonstrated in Section~\ref{sec:Simulation}.

\section{Application to a Mechanical System}\label{sec:Simulation}
\subsection{Simplified model of DEA actuator}
We apply the observer design methodology developed in Section~\ref{sec:Observer design} to a mechanical system with electrostatic actuation. The system is a simplified model of a dielectric elastomer actuator (DEA) \cite{Hammoud2024}, where electrical dynamics are treated as quasi-static, resulting in a second-order mechanical model with state-dependent input mapping.
The state vector is $x = [q, p]^\top \in \mathbb{R}^2$, where $q$ denotes the actuator displacement and $p$ represents the momentum. The input is defined as $u(t) = U(t)^2$, with $U$ being the applied voltage. The Port-Hamiltonian model follows the form \eqref{eq:plant}:
\begin{equation}
    \begin{bmatrix}
        \dot{q}\\\dot{p}
    \end{bmatrix} = \begin{bmatrix}
        0 & 1\\
        -1 & -\eta
    \end{bmatrix}\begin{bmatrix}
        kq\\
        \frac{p}{m}
    \end{bmatrix} + \begin{bmatrix}
        0\\ g_2(q)
    \end{bmatrix} u,
    \quad
    y = \begin{bmatrix}
        0 & g_2(q)
    \end{bmatrix}\begin{bmatrix}
        kq\\
        \frac{p}{m}
    \end{bmatrix},
    \label{eq:ph_scalar_system}
\end{equation}
where the input mapping is:
\begin{equation}
g_2(q) = 2\varepsilon(q + q_0)^3.
\label{eq:g2_electrostatic}
\end{equation}
The system parameters are: mass $m = 1$ kg, stiffness $k = 1000$ N/m, damping $\eta = 50$ N·s/m, reference displacement $q_0 = 10^{-3}$ m, and electrostatic constant $\varepsilon = 2.8$ F/m.

From \eqref{eq:jacobian_general}, the Jacobian structure is now a $2 \times 2$ matrix and reads:
\begin{equation}
\frac{\partial \gamma}{\partial x}(\bar{x}, u) =
\begin{bmatrix}
0 & 0 \\
a(\bar{q})u & 0
\end{bmatrix}
-
L
\begin{bmatrix}
\beta(\bar{q},\bar{p}) & g_2(\bar{q})M^{-1}
\end{bmatrix}, 
\label{eq:jacobian_scalar}
\end{equation}
where the observer gain is $L = [L_1, L_2]^\top \in \mathbb{R}^2$, and the scalar quantities are defined following \eqref{eq:aj_def}--\eqref{eq:Gamma_def}:
\begin{align}
a(q) &= 6\varepsilon(q+q_0)^2, 
\label{eq:a_scalar}\\
\beta(q,p) &= \frac{a(q)p}{m}. 
\label{eq:beta_scalar}
\end{align}
\subsection{Operating Domain and Polytopic Embedding}
The operating domain is determined by open-loop simulation with a step input at $t=1$ s. The maximum stable input amplitude is $U_{\max}=5.14$ kV (thus $\bar{u}=U_{\max}^2 = 26.42$ kV$^2$). This value defines the upper bound for the polytopic embedding under Assumption~\ref{ass:domain}. Note that this bound does not restrict the physical system's admissible input range, but only defines the stability region for the LPV representation.
Under Assumption~\ref{ass:domain}, the operating domain $\mathcal{D}$ for this system is:
\begin{align}
q_{\min} &= -8.1257 \times 10^{-6} \text{ m}, \quad q_{\max} = 4.6754 \times 10^{-4} \text{ m}, \nonumber\\
p_{\min} &= -6.302 \times 10^{-3} \text{ kg·m/s}, \quad p_{\max} = 2.228 \times 10^{-3} \text{ kg·m/s}.\nonumber
\end{align}
For system \eqref{eq:ph_scalar_system}, the polytopic embedding involves four independent bounded parameters evaluated at the estimated state: $\hat{\theta}_1 = \hat{a} = a(\hat{q})$, $\hat{\theta}_2 = u$, $\hat{\theta}_3 = \hat{\beta} = \hat{a}\hat{p}/m$, and $\hat{\theta}_4 = \hat{g} = g_2(\hat{q})$. Following \eqref{eq:num_vertices}, we have $n_{nl} = 3$ (three nonlinear parameters: $a$, $\beta$, $g$) and $n_u = 1$ (single input), yielding $n_k = 4$ and thus $N_v = 2^4 = 16$ vertices.
The vertex matrices $A_i(L)$ in \eqref{eq:lpv_final} are obtained by taking all $16$ combinations of extreme values:
\begin{equation}
(a_i, u_i, \beta_i, g_i) \in \{a_{\min}, a_{\max}\} \times \{0, \bar{u}\} \times \{\beta_{\min}, \beta_{\max}\} \times \{g_{\min}, g_{\max}\},
\end{equation}
where the parameter bounds, computed from the operating domain $\mathcal{D}$ and equations \eqref{eq:a_scalar}--\eqref{eq:beta_scalar}, are:
\begin{align}
a_{\min} &= 1.65281 \times 10^{-5}, & a_{\max} &= 3.61820 \times 10^{-5}, \nonumber\\
\beta_{\min} &= -2.280516 \times 10^{-7}, & \beta_{\max}& = 8.064450 \times 10^{-8},\nonumber\\
g_{\min} &= 5.46459 \times 10^{-9}, & g_{\max} &= 1.76996 \times 10^{-8}. \nonumber
\end{align}
The weighting functions $h_i(\hat{x}, u)$ for $i = 1, \ldots, 16$ are constructed via Proposition~\ref{prop:weighting_functions} using the normalized sector variables and elementary weights for the four parameters.
The initial conditions for the plant and observer are:
\begin{align*}
\text{Plant:} \quad &[q_0, p_0]^\top = [0, 0]^\top, \\
\text{Observer:} \quad &[\hat{q}_0, \hat{p}_0]^\top = [2 \times 10^{-4}, -2 \times 10^{-3}]^\top.
\end{align*}

\subsection{Observer Synthesis and Performance Comparison}
Two observer designs are evaluated: a constant-gain observer and a gain-scheduled observer, both obtained by solving the LMI problem \eqref{eq:lmi} from Theorem~\ref{th:lmi_synthesis}.
The parameter $\lambda$ in \eqref{eq:lmi} represents a \textit{certified worst-case exponential decay rate} from Lyapunov stability theory. Satisfaction of \eqref{eq:lmi} guarantees that the estimation error satisfies the exponential bound \eqref{eq:exponential_bound}:
\begin{equation*}
\|\tilde{x}(t)\|_2 \leq \kappa e^{-\lambda t} \|\tilde{x}(0)\|_2, \quad 
\kappa = \sqrt{\lambda_{\max}(P)/\lambda_{\min}(P)},
\end{equation*}
for all trajectories within the operating domain $\mathcal{D}$. Since the LMI conditions must hold \textit{simultaneously} for all 16 vertices of the polytopic embedding, this introduces conservatism. Consequently, $\lambda$ should be interpreted as a guaranteed lower bound on the exponential decay rate, not necessarily the actual convergence speed observed in simulation.
Table~\ref{table:comparison} shows that the gain-scheduled observer \eqref{eq:gain_scheduled} satisfies the LMI conditions for significantly larger values of $\lambda$, achieving approximately a fivefold improvement in the maximum certifiable decay rate ($\lambda_{\max}^{\text{sched}} = 4.554$ vs. $\lambda_{\max}^{\text{const}} = 0.897$).
\begin{table}[!htpb]
\caption{Maximum Certifiable Decay Rates}
\label{table:comparison}
\centering
\begin{tabular}{|l|c|}
\hline
\textbf{Observer} & $\boldsymbol{\lambda_{\max}}$ (certified) \\
\hline
Constant-Gain  & 0.897 \\
Gain-Scheduled & 4.554 (5.07$\times$ improvement) \\
\hline
\end{tabular}
\vspace{0.5em}
\\ 
\footnotesize
The gain-scheduled design certifies stability at decay rates $5$ times larger than the 
constant-gain approach, reflecting reduced conservatism.
\end{table}

\subsection{Simulation Scenarios}
Two scenarios are considered to compare the constant-gain observer (obtained from \eqref{eq:lmi}) and the gain-scheduled observer \eqref{eq:gain_scheduled}.
\subsubsection{\textbf{Scenario 1 (Identical $\lambda$)}}
Both observers are designed with $\lambda = 0.0897$ (10\% of $\lambda_{\max}^{\text{const}} = 0.897$), isolating the effect of the parameter-dependent structure. As shown in Figs.~\ref{fig:group1} and Table~\ref{table:performance_group1}, both observers exhibit comparable performance in terms of settling time and RMS error. The gain-scheduled observer shows a slightly higher peak momentum error (2.88 vs. 2.46~g$\cdot$m/s). This confirms that a larger maximum certifiable decay rate $\lambda_{\max}$ does not necessarily imply faster convergence when the same $\lambda$ is used.
\begin{figure}[!t]
    \centering
    \includegraphics[width=0.5\textwidth]{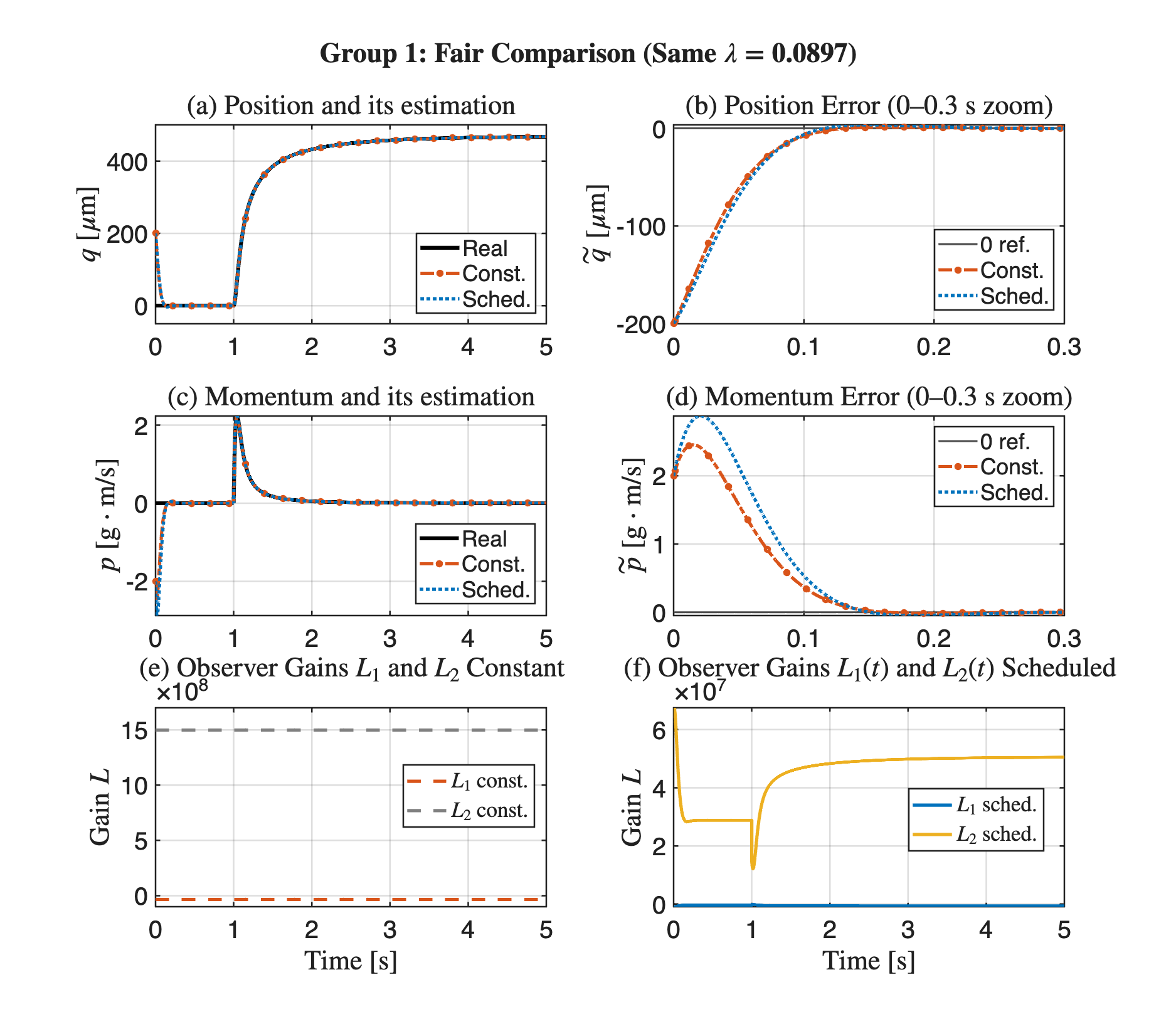}
    \caption{Scenario 1 (Fair comparison, $\lambda = 0.0897$): 
    (a) Position estimation $q$ vs. $\hat{q}$. 
    (b) Position error $\tilde{q}$ (0--0.4~s zoom). 
    (c) Momentum estimation $p$ vs. $\hat{p}$. 
    (d) Momentum error $\tilde{p}$ (0--0.4~s zoom). 
    (e) Observer gains $L_1(t)$ and $L_2(t)$.
    (f) Observer scheduled gains $L_1(t)$ and $L_2(t)$.
    }
    \label{fig:group1}
\end{figure}
\begin{table}[!t]
\caption{Performance Comparison --- Scenario 1 (Fair Comparison, $\lambda = 0.0897$)}
\label{table:performance_group1}
\centering
\begin{tabular}{|l|c|c|}
\hline
\textbf{Metric} & \textbf{Constant-Gain} & \textbf{Gain-Scheduled} \\
\hline
Peak $|\tilde{q}|$ [$\mu$m] & 200 & 200 \\
Peak $|\tilde{p}|$ [g$\cdot$m/s] & 2.46 & 2.88 \\
Peak $\|\tilde{x}\|$ [g$\cdot$m/s] & 2.46 & 2.88 \\
RMS $\|\tilde{x}\|$ [g$\cdot$m/s] & 0.240 & 0.295 \\
Settling time $T_s$ (2\%) [s] & 0.141 & 0.139 \\
Overshoot $|\tilde{p}|$ [\%] & 22.8 & 43.8 \\
\hline
\end{tabular}
\vspace{0.5em}
\\ 
\footnotesize
At identical $\lambda = 0.0897$, both observers exhibit comparable performance, 
confirming that larger $\lambda_{\max}$ does not guarantee faster convergence.
\end{table}
\subsubsection{\textbf{Scenario 2 (Exploiting scheduling)}}
The constant-gain observer is evaluated at its maximum feasible value $\lambda_{\max}^{\text{const}} = 0.897$, while the gain-scheduled observer is tested at $\lambda = 0.897$ and at its maximum feasible value $\lambda = 4.554$, for which the constant-gain LMI conditions are infeasible. At $\lambda = 4.554$, the gain-scheduled observer achieves a 31\% reduction in peak momentum error and a 34\% reduction in RMS error compared to the constant-gain design at $\lambda = 0.897$ (see Table~\ref{table:performance_group2} and Fig.~\ref{fig:group2}). Moreover, overshoot is eliminated (0\% vs. 45\%), at the cost of a longer settling time.
\textbf{Key insight:} the gain-scheduled observer enables certified stability in regimes where the constant-gain design becomes infeasible. The main advantage of scheduling is therefore reduced conservatism, rather than improved performance at identical tuning levels. The time evolution of the scheduled gains is reported in Figs.~\ref{fig:group1}(e)--(f) and Fig.~\ref{fig:sched_group2}.
\begin{figure}[!t]
    \centering
    \includegraphics[width=0.5\textwidth]{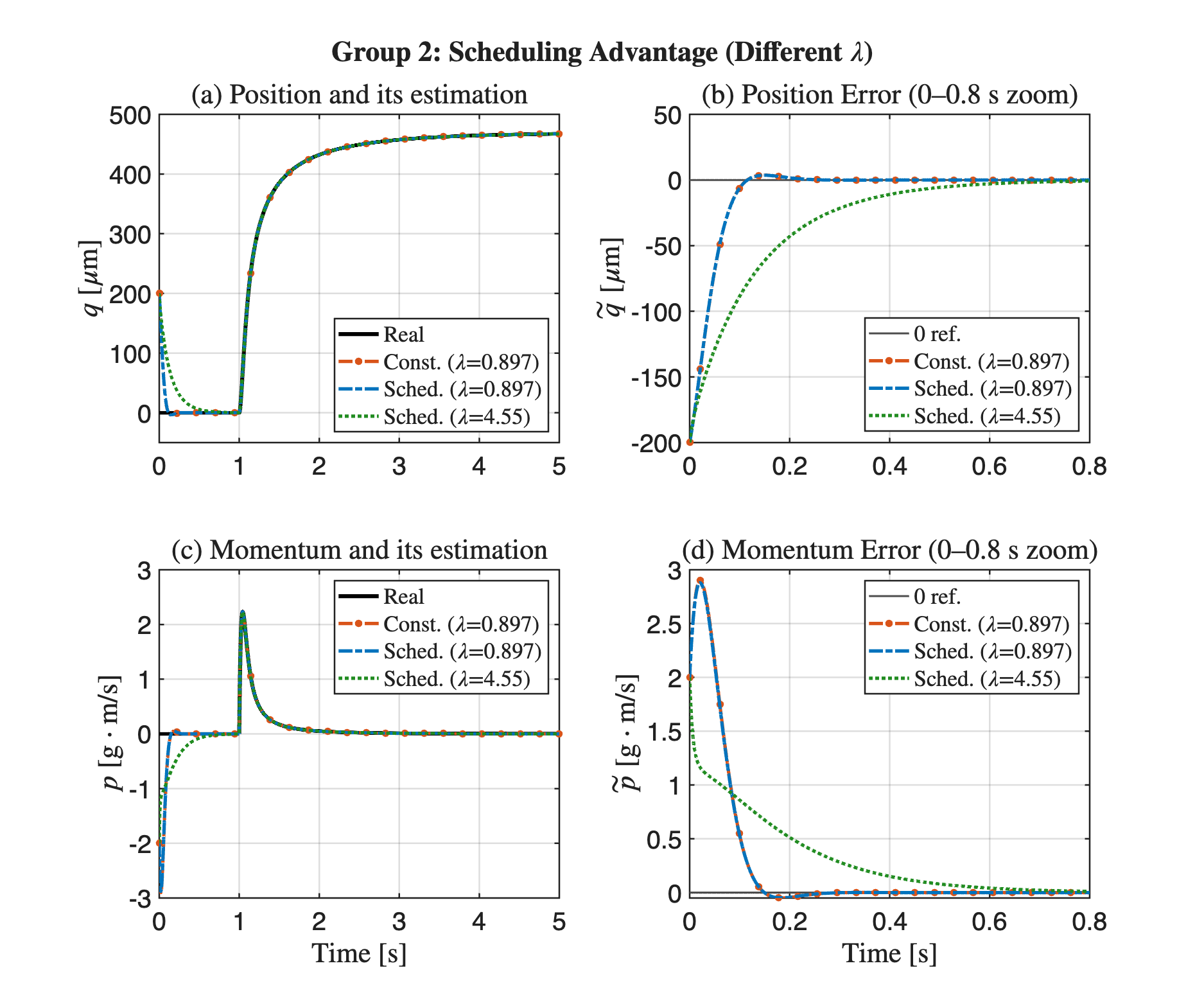}
    \caption{Scenario 2 (Different $\lambda$ values): 
    (a) Position estimation. 
    (b) Position error (0--0.8~s zoom). 
    (c) Momentum estimation. 
    (d) Momentum error (0--0.8~s zoom).}
    \label{fig:group2}
\end{figure}
\begin{figure}[!t]
    \centering
    \includegraphics[width=0.31\textwidth]{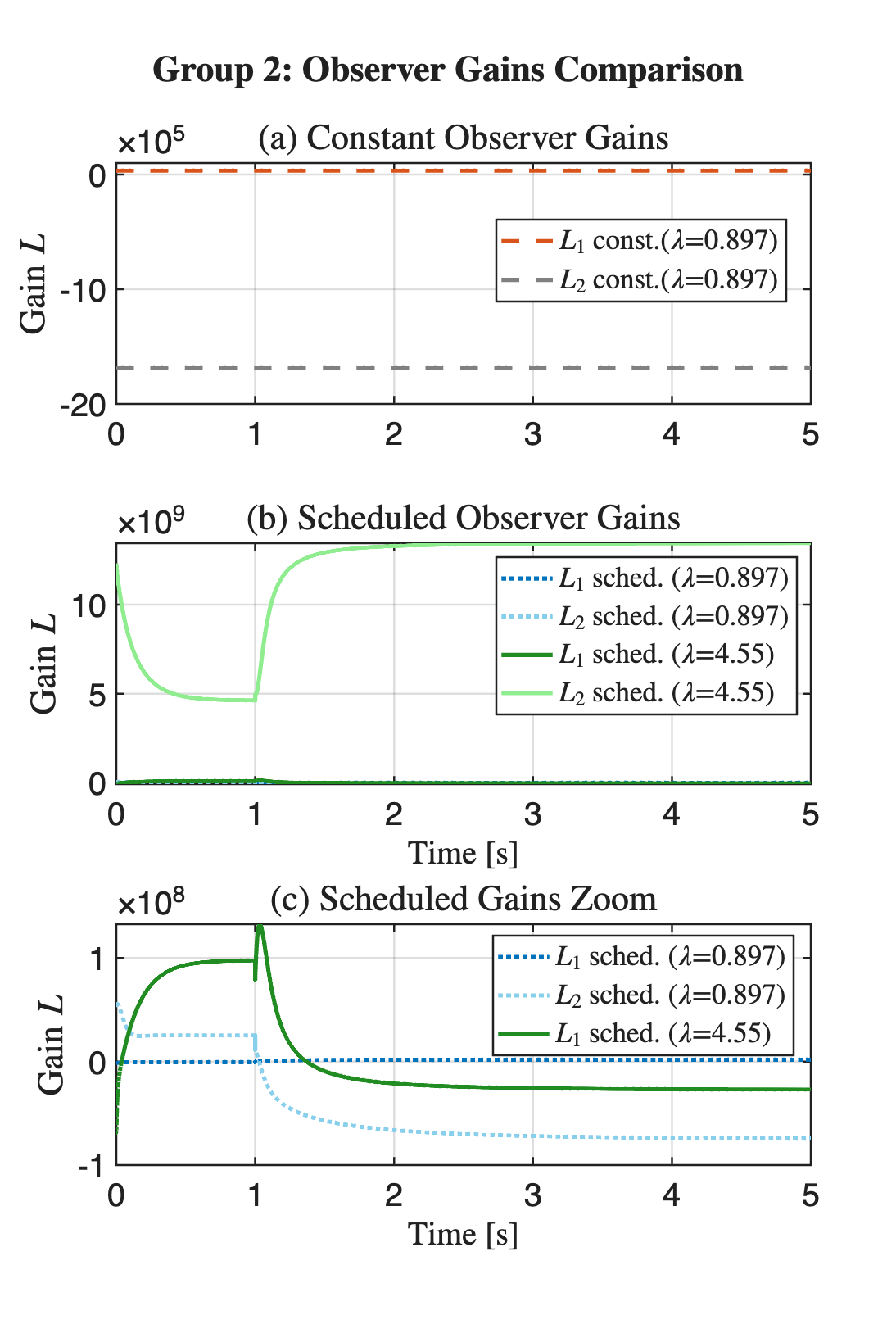}
    \caption{(a) Constant Observer gains, only for $\lambda = 0.897$ since for $\lambda = 4.55$ the constant $L$ approach fails.
    (b) Scheduled Observer gains.
    (c) Scheduled Observer gains zoom. 
    }
    \label{fig:sched_group2}
\end{figure}
\begin{table}[!t]
\caption{Performance Comparison --- Scenario 2 (Different $\lambda$ Values)}
\label{table:performance_group2}
\centering
\begin{tabular}{|l|c|c|c|}
\hline
\textbf{Metric} & \textbf{Const} & \textbf{Sched} & \textbf{Sched} \\
 & ($\lambda$=0.897) & ($\lambda$=0.897) & ($\lambda$=4.554) \\
\hline
Peak $|\tilde{q}|$ [$\mu$m] & 200 & 200 & 200 \\
Peak $|\tilde{p}|$ [g$\cdot$m/s] & 2.90 & 2.88 & 2.00 \\
Peak $\|\tilde{x}\|$ [g$\cdot$m/s] & 2.90 & 2.88 & 2.01 \\
RMS $\|\tilde{x}\|$ [g$\cdot$m/s] & 0.298 & 0.295 & 0.198 \\
Settling time $T_s$ (2\%) [s] & 0.138 & 0.138 & 0.607 \\
Overshoot $|\tilde{p}|$ [\%] & 45.0 & 44.0 & 0.0 \\
\hline
\multicolumn{4}{|l|}{\footnotesize Improvement (Sched $\lambda$=4.554 vs. Const): 
Peak $\|\tilde{x}\|$: +31\%, RMS: +34\%} \\
\hline
\end{tabular}
\vspace{0.5em}
\\ 
\footnotesize
The scheduled observer at $\lambda = 4.554$ (infeasible for constant-gain) 
achieves substantial overshoot reduction at the cost of longer settling time.
\end{table}


The simulation results (Tables~\ref{table:performance_group1}--\ref{table:performance_group2}) show that when both observers are designed with the same $\lambda$, their performance is comparable. This indicates that the larger $\lambda_{\max}$ achieved by the gain-scheduled design reflects reduced conservatism in the synthesis conditions rather than inherent performance improvement. The advantage of the gain-scheduled observer becomes evident when operating at $\lambda$ values for which the constant-gain LMI conditions are infeasible.

The constant-gain observer is preferable when its LMI conditions are feasible due to its simpler implementation. However, for systems with strong nonlinearities—particularly in the input matrix—the gain-scheduled approach provides a less conservative alternative that enables observer synthesis when the constant-gain design fails. This is especially relevant for pH systems with state-dependent input matrices $g(\cdot)$, where existing methods (e.g., contraction-based observers~\cite{Spirito2024}) rely on restrictive constant input assumptions. The gain-scheduled observer achieves reduced conservatism by adapting the output injection matrix $L(\hat{x},u)$ to the operating point through interpolation of vertex-dependent gains.

\section{Conclusions and Perspectives}\label{sec:conclusions} 
This paper presents an LMI-based observer design methodology for a class of port-Hamiltonian systems with state-dependent input matrices, combining integral mean value representation with polytopic LPV embedding. The approach targets electromechanical systems, where quasi-static electrical assumptions induce nonlinear input maps that violate the constant input matrix requirement of existing contraction-based observer designs. It introduces a systematic polytopic embedding framework that enables tractable observer synthesis for systems with Lipschitz-continuous nonlinear input matrices, and compares constant-gain and gain-scheduled observer designs. The constant-gain observer is preferable when feasible due to its simplicity, whereas the gain-scheduled approach provides a less conservative alternative in highly nonlinear cases, allowing significantly larger certifiable decay rates (e.g., $\lambda_{max}$ from $0.897$ to $4.554$ in the studied numerical example). Simulation results show that higher decay rates reflect reduced conservatism in the synthesis conditions rather than guaranteed performance improvement; both observers perform similarly under identical tuning, with scheduling advantages arising only when operating beyond the feasibility range of constant gains.
The ongoing work has two main directions. First, we aim to extend the proposed method to more complex nonlinear systems, including those with state-dependent interconnection and dissipation matrices and/or non-quadratic Hamiltonian functions. Second, we plan to validate the proposed observer experimentally, for example, on HASEL or DEA actuator platforms \cite{Cisneros2025Mechatronics}.



\end{document}